\documentclass[12pt]{article}

\usepackage{fullpage}
\usepackage{amssymb}
\usepackage{amsmath}
\usepackage{amsthm}
\textwidth=30cc\baselineskip=10pt

\newtheorem{thm}{Theorem}
\newtheorem{prob}{Problem}
\newtheorem{conj}{Conjecture}
\newtheorem{cor}{Corollary}

\newtheorem{lem}{Lemma}
\newtheorem{rem}{Remark}
\begin{document}

\title{On a  problem of Sierpi\'nski}

\author{Jin-Hui Fang\\
 Department of Mathematics, Nanjing University \\ of
Information Science \& Technology,  Nanjing 210044, P. R.
CHINA\\ E-mail:fangjinhui1114@163.com \\ \and  Yong-Gao Chen\footnote{Corresponding author} \\
School of Mathematical Sciences and Institute of Mathematics,\\
Nanjing Normal University,
Nanjing 210046, P. R. CHINA\\
E-mail: ygchen@njnu.edu.cn}

\date{}

\maketitle


\renewcommand{\thefootnote}{}


\footnote{2010 \emph{Mathematics Subject Classification}:
11A41,11A67.}

\footnote{\emph{Key words and phrases}: Sierpi\'nski's problem;
consecutive primes; pairwise relatively prime.}

\renewcommand{\thefootnote}{\arabic{footnote}}
\setcounter{footnote}{0}


\begin{abstract} For any integer $s\ge 2$, let $\mu_s$ be the least integer so
that every integer $\ell >\mu_s$ is the sum of exactly $s$
integers $>1 $ which are pairwise relatively prime.  In this paper
we solve an old problem of Sierpi\'nski by determining all
$\mu_s$. As a corollary, we show that $p_2+p_3+\cdots
+p_{s+1}-2\le \mu_s\le p_2+p_3+\cdots +p_{s+1}+1100$ and the set
of integers $s\ge 2$ with $\mu_s= p_2+p_3+\cdots +p_{s+1}+1100$
has the asymptotic density 1, where $p_i$ is the $i$-th prime.
\end{abstract}

\section{ Introduction}

Let $s\ge 2$ be an integer. Denote by $\mu_s$ the least integer so
that every integer $\ell >\mu_s$ is the sum of exactly $s$
integers $>1 $ which are pairwise relatively prime. In 1964,
Sierpi\'nski \cite{Sier} asked a determination  of $\mu_s$. Let
$p_1=2$, $p_2=3, \ldots$ be the sequence of consecutive primes. In
1965, P. Erd\H os \cite{Erdos}  proved that there exists an
absolute constant $C$ with $\mu_s\le p_2+p_3+\cdots +p_{s+1}+C.$
It is easy to see that $p_2+p_3+\cdots +p_{s+1}-2$ is not the sum
of exactly $s$ integers $>1 $ which are pairwise relatively prime.
So $\mu_s\ge p_2+p_3+\cdots +p_{s+1}-2$. Let $\mu_s =
p_2+p_3+\cdots +p_{s+1}+c_s$. Then $-2\le c_s\le C$. It is easy to
see that $c_2=-2$.

Let $U$ be the set of integers of the form
$$p_2^{k_2}+p_3^{k_3}+\cdots +p_{11}^{k_{11}}-p_2-p_3-\cdots
-p_{11}\le 1100,$$ where $k_i(2\le i\le 11)$ are positive integers.
$U$ can be given explicitly by Mathematica (one may refer to the
Appendix). Let $V_s$ be the set of integers of the form
$$p_{i_1}+\cdots +p_{i_l}-p_{j_1}-\cdots -p_{j_l}\le 1100,$$ where $
2\le j_1<\cdots <j_l\le s+1<i_1<\cdots <i_l$. It is clear that $0\in
U$ and $0\in V_s$ ($l=0$). Define $U+V_s=\{ u+v\mid u\in U, \, v\in
V_s\} $. Then $U+V_s$ is finite.

In this paper the following results are proved. The main results
have been announced at ICM2010.

\begin{thm}\label{thm1}  Let $s\ge 2$ be any given positive
integer. Then
$$c_s=\max \{ 2n \mid 2n\le \min \{ 1100, p_{s+2}\},\,  n\in \mathbb{Z}, 2n\notin U+V_s\} . $$
\end{thm}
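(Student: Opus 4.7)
Let $S_s = p_2 + p_3 + \cdots + p_{s+1}$, so by definition $\mu_s = S_s + c_s$. Two pieces of scaffolding are already in place: Erd\H os' estimate $c_s \le C$ confines attention to a bounded window, and the elementary bound $c_s \ge -2$ comes from the failure of $S_s - 2$ to be representable. Since $S_s$ is a sum of $s$ odd primes it has parity $s \pmod 2$, and a representation $\ell = a_1+\cdots+a_s$ with the $a_i$ pairwise coprime has matching parity precisely when all the $a_i$ are odd; the only alternative is a single even summand, which flips the parity. I would therefore split by parity: for odd $m$, a direct construction introduces a power of $2$ as the unique even summand and yields a representation of $\ell = S_s + m$, so odd values never contribute to $c_s$; for even $m$ with $-2 \le m \le \min\{1100, p_{s+2}\}$ I would prove the equivalence
\[
\ell = S_s + m \text{ is representable} \iff m \in U + V_s.
\]
Together with the assertion that every even $m > \min\{1100,p_{s+2}\}$ gives a representable $\ell$, these pin down $c_s$ as the largest bad $m$ in the window, matching the theorem. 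Note $U$ and $V_s$ contain only nonnegative even integers, since each prime in their definitions is odd, so every power-difference $p_j^{k_j}-p_j$ and every swap-difference $p_{i_k}-p_{j_k}$ is even.

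\medskip

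The technical core is a normalization lemma for the even-$m$ case. Given $\ell = a_1 + \cdots + a_s$ with all $a_i$ odd and pairwise coprime, let $q_i$ be the least prime factor of $a_i$; distinctness combined with ordering forces $q_i \ge p_{i+1}$ and hence $a_i \ge p_{i+1}$. Each $a_i$ is then of one of three types: (i) $a_i = p_{i+1}$ (unchanged), (ii) a higher prime power $p_j^{k_j}$ with $k_j \ge 2$, where necessarily $p_j \le p_{11}$ because $p_{12}^2 - p_{12} = 1332 > 1100$ prevents any higher-indexed prime from contributing a square (or more) without exceeding the allowed slack $m \le 1100$, or (iii) a single prime $q_i > p_{s+1}$ substituted in for some $p_{j_k} \in \{p_2,\ldots,p_{s+1}\}$. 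Arithmetic bookkeeping then gives $\ell - S_s = u + v$ with $u \in U$ recording the type-(ii) contributions and $v \in V_s$ the type-(iii) swaps. The converse is constructive: given $m = u + v$, start from the list $p_2,\ldots,p_{s+1}$, raise the specified small primes to the prescribed powers to realize $u$, and swap a disjoint subset of small primes for appropriate larger primes to realize $v$; pairwise coprimality is automatic since the summands have pairwise disjoint prime supports, and the condition $m \le p_{s+2}$ guarantees that the swap primes can always be chosen immediately above $p_{s+1}$.

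\medskip

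The step I expect to be the main obstacle is the sharpness of the cutoff at $1100$: to justify the $\min$ in the theorem I must show that every even $m$ with $1100 < m \le p_{s+2}$ is automatically in $U + V_s$ (or that $\ell$ is representable by an alternative construction), so no such $m$ can win the maximum. This requires combining the explicit finite description of $U$ (produced by the Mathematica enumeration in the Appendix) with a covering argument showing that the translates $U + v$, as $v$ ranges over $V_s$, together exhaust every even integer in $(1100,p_{s+2}]$. For large $s$ the covering will be driven by the density of primes in short intervals just above $p_{s+1}$, which makes many one-prime swaps available in $V_s$; for small $s$ (where $p_{s+2}$ may be close to or below $1100$) a direct finite verification suffices. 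Assembling the parity dichotomy, the normalization lemma, the constructive converse, and the tail covering yields the exact formula for $c_s$ asserted by the theorem.
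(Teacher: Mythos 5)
Your plan for the window $-2\le m\le \min\{1100,p_{s+2}\}$ (parity split, normalization of a pairwise coprime representation into prime powers, bookkeeping into a $U$-part and a $V_s$-part, plus the constructive converse) matches the paper's Lemma \ref{lem5} and the second half of its proof of Theorem \ref{tau'}. One small omission there: your trichotomy silently assumes each odd summand is a prime power; this needs the separate (easy) observation that a summand with two distinct prime factors would force the total to be at least $3\cdot 5+p_4+\cdots+p_{s+2}=p_2+\cdots+p_{s+1}+p_{s+2}+7$, contradicting $m\le p_{s+2}$.

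The genuine gap is in the step you yourself single out as the main obstacle. You propose to handle even $m$ with $1100<m\le p_{s+2}$ by showing that the translates $U+v$, $v\in V_s$, cover all even integers in $(1100,p_{s+2}]$. That covering statement is false for all large $s$: by definition every element of $U$ is at most $1100$ and every element of $V_s$ is at most $1100$, so $U+V_s\subseteq(-\infty,2200]$, while $p_{s+2}\to\infty$. You have conflated ``$m\in U+V_s$'' with ``$p_2+\cdots+p_{s+1}+m$ is representable''; the two are linked only inside the window $m\le\min\{1100,p_{s+2}\}$, which is precisely why the theorem restricts the maximum to that window. What is actually needed for $m>1100$ is a direct proof of representability, and this is where the real content of the paper lies. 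Lemma \ref{lem2} shows, by induction on even numbers using a Bertrand-type prime-gap lemma, that every even $n\ge 1102$ equals $\sum_{i\ge 2}(p_i^{t_i}-p_i)$ while $1100$ does not; this is the true source of the constant $1100$, which your sketch never derives (you attribute it only to the Mathematica enumeration of $U$). The paper then runs an induction on $s$: for $2n>1100$ it either extracts a large prime $q$ with $\tfrac{2}{3}(p_{s+2}+2t)<q<p_{s+2}+2t$ and applies the induction hypothesis to $\ell-q$, or, when $q$ is not the largest part, deduces $2n<6p_{s+2}$ so that the representation from Lemma \ref{lem2} uses exponent $1$ on every prime beyond $p_{s+2}$, giving $\ell=\sum_{i=2}^{s+2}p_i^{t_i}$ outright. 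Without Lemma \ref{lem2} and this induction on $s$ (which your proposal does not set up), you cannot rule out obstructions with $m>1100$, so the asserted formula for $c_s$ is not reached.
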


 \begin{rem}  As examples, by Theorem \ref{thm1} we have $c_{500}=16$, $c_{900}=14$, $c_{1000}=8$,
$c_{2000}=22$ (see the last section).\end{rem}

\begin{cor}\label{cor2} If $p_{s+2}-p_{s+1}>1100$, then
$$\mu_s= \sum_{i=2}^{s+1} p_i+1100.$$
In particular, the set of integers $s\ge 2$ with
$$\mu_s= \sum_{i=2}^{s+1} p_i+1100$$
has the asymptotic density 1.\end{cor}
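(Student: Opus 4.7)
\textbf{Proof plan for Corollary~\ref{cor2}.} I will deduce the corollary from Theorem~\ref{thm1} in two stages: determine $c_s$ exactly under the gap hypothesis, then bootstrap to the density claim via a sieve bound. Suppose $p_{s+2}-p_{s+1}>1100$. Since $p_{s+1}\ge 3$, this forces $p_{s+2}>1100$, so $\min\{1100,p_{s+2}\}=1100$ and Theorem~\ref{thm1} simplifies to $c_s=\max\{2n\le 1100:\ n\in\mathbb{Z},\ 2n\notin U+V_s\}$.

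The first key reduction is that $V_s=\{0\}$. Indeed, for any configuration with $l\ge 1$ in the definition of $V_s$, each $p_{i_k}\ge p_{s+2}$ and each $p_{j_k}\le p_{s+1}$, so
\[
p_{i_1}+\cdots+p_{i_l}-p_{j_1}-\cdots-p_{j_l}\ge l(p_{s+2}-p_{s+1})>1100,
\]
violating the $\le 1100$ cutoff built into the definition of $V_s$. Thus $U+V_s=U$, and the first assertion of the corollary reduces to showing $1100\notin U$; equivalently, that no tuple $(k_2,\ldots,k_{11})$ of positive integers satisfies $\sum_{i=2}^{11} p_i^{k_i}=1258=1100+(p_2+\cdots+p_{11})$. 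This is a finite computation: the admissible ``upgrade increments'' $p^k-p\le 1100$ are five values for $p=3$, three for $p=5$, two for $p=7$, and one each for $p\in\{11,13,17,19,23,29,31\}$, and one enumerates all subset sums (at most one increment per prime) and checks that none equals exactly $1100$. This is precisely the content of the explicit listing of $U$ in the Appendix, from which $1100\notin U$ is read off. Consequently $c_s=1100$ and $\mu_s=\sum_{i=2}^{s+1}p_i+1100$.

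For the density claim it suffices to show that $\{s:\ p_{s+2}-p_{s+1}\le 1100\}$ has density zero. By Brun's upper-bound sieve, for every fixed even $d$ the number of primes $p\le x$ with $p+d$ also prime is $O(x/(\log x)^2)$, with a constant that, for $d\le 1100$, depends only on the absolute bound $1100$. A union bound over the $550$ even values $d\in\{2,4,\ldots,1100\}$ then shows that the number of primes $p\le x$ whose successor satisfies $p'\le p+1100$ is $O(x/(\log x)^2)=o(\pi(x))$. Taking $x=p_{N+1}\sim N\log N$, this produces $o(N)$ such indices $s\le N$, so the gap condition $p_{s+2}-p_{s+1}>1100$ holds on a set of $s$ of density $1$; combined with the first part, this yields the second assertion.

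The main obstacle is the finite verification $1100\notin U$: everything else is a formal manipulation of Theorem~\ref{thm1} together with the standard Brun sieve input. The rest of the argument is essentially bookkeeping.
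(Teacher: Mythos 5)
Your proposal is correct and follows essentially the same route as the paper: deduce $V_s=\{0\}$ from the gap hypothesis, conclude $U+V_s=U$ and $c_s=1100$ from Theorem~\ref{thm1} together with the finite verification $1100\notin U$, and obtain the density statement from Brun's upper-bound estimate for primes $p\le x$ with $p+k$ prime. The only difference is that you spell out a few steps the paper leaves implicit (the lower bound $l(p_{s+2}-p_{s+1})$ for elements of $V_s$ and the union bound over even $d\le 1100$), which is harmless.
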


We pose a problem here.

\begin{prob}\label{prob1} Find the least positive integer $s$
 with $\mu_s=\sum\limits_{i=2}^{s+1} {p_i} +1100$.\end{prob}

Basing on the proof of Theorem \ref{tau'} in Section \ref{sec2}, we
pose the following conjecture.

\begin{conj} For $s\ge 3$, every integer $l>p_2+p_3+\cdots
+p_{s+2}$ is the sum of exactly $s$ distinct primes.\end{conj}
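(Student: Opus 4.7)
The conjecture is left open in the paper, so my proposal is necessarily speculative. I would attempt an induction on $s\ge 3$, mirroring the substitution technique from the proof of Theorem~\ref{tau'}. Set $S_s:=p_2+p_3+\cdots+p_{s+2}$. The parity of $l$ dictates whether $2$ must appear among the summands: if $l\equiv s\pmod 2$, one aims for $s$ distinct odd primes, otherwise one uses $2$ together with $s-1$ distinct odd primes.

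For the base case $s=3$, I would show every $l>26$ is a sum of three distinct primes by splitting on parity. For odd $l$, Helfgott's ternary Goldbach theorem supplies $l=p+q+r$ with primes $p,q,r$; the non-distinct triples $(p,p,r)$ correspond to $r=l-2p$ and form a sparse set, which can be excluded by taking a different representation from the asymptotically many available. For even $l$, write $l=2+(l-2)$ and apply binary Goldbach to $l-2$ with distinct summands, verified numerically for the bounded range just above $26$ and asymptotically by counting representations.

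For the inductive step, assume the conjecture for $s-1$ and fix $l>S_s$. Since $l-S_{s-1}>p_{s+2}$, the interval $(p_{s+2},\,l-S_{s-1})$ is nonempty and, by Bertrand--Chebyshev-type density estimates, contains primes once $l$ is sufficiently beyond the threshold. Choose such a prime $q$; then $l-q>S_{s-1}$, and the inductive hypothesis produces $l-q=q_1+\cdots+q_{s-1}$ with distinct primes $q_i$. One then needs $q\notin\{q_1,\ldots,q_{s-1}\}$, which can be arranged either by strengthening the inductive statement to control the size of the $q_i$ or by exploiting the multiplicity of admissible $q$ in the window.

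The chief obstacle is the distinctness bookkeeping in the inductive step: the hypothesis yields one representation of $l-q$ whose summands depend on $q$, so one cannot pre-select a set to avoid. A natural workaround is to strengthen the induction to supply a representation whose largest summand is bounded above by an explicit function of $l-q$; picking $q$ past that bound then forces distinctness. A secondary difficulty is the bounded range in the base case, where asymptotic Goldbach-type methods are inadequate and a direct numerical check (in the spirit of the Mathematica verification invoked for Theorem~\ref{thm1}) would likely be required to close the low range.
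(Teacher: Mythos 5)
You are attempting to prove a statement that the paper itself leaves open: Conjecture 1 is not proved anywhere in the text. The authors only remark that it would follow from a constrained ternary Goldbach statement (every odd $n\ge p_{s-1}+p_s+p_{s+1}+p_{s+2}$ is a sum of three primes $q_1<q_2<q_3$ with $q_1\ge p_{s-1}$), and that by results on the odd Goldbach problem with almost equal primes this is known only for all sufficiently large $s$. So there is no proof in the paper to compare against, and your proposal has to stand on its own; it does not.

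The fatal gap is the base case. For $s=3$ and even $l>26$, parity and distinctness force any representation as three distinct primes to have the form $l=2+p+q$ with $p\ne q$ odd primes, so $l-2$ must be a sum of two distinct odd primes. Your plan to ``apply binary Goldbach to $l-2$'' invokes an open conjecture as though it were a theorem; indeed, Conjecture 1 at $s=3$ \emph{implies} the Goldbach conjecture for every even number exceeding $24$, so no unconditional argument of the kind you sketch can close the base case. The inductive step has a second hole that is not merely technical: writing $S_s=p_2+\cdots+p_{s+2}$, for $l$ just above the threshold (say $l=S_s+1$ or $S_s+2$) the interval $(p_{s+2},\,l-S_{s-1})$ from which you must draw the prime $q$ is empty, and no density estimate helps; since these near-threshold values occur for every $s$, they cannot be disposed of by a finite numerical check either. (This is consistent with the paper's own reduction, which peels off the \emph{smallest} primes and pushes all the difficulty into a single ternary representation with a lower bound on the smallest summand, rather than peeling off a large prime as you do.) The distinctness bookkeeping you identify is a real but secondary issue; the Goldbach-hardness of the $s=3$ even case and the failure of the step near the threshold are the reasons this statement is, and remains, a conjecture.
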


This conjecture would follow from the following `` Every odd
integer $n\ge p_{s-1}+p_s+p_{s+1}+p_{s+2}$ can be written as the
sum of three prime numbers $q_1<q_2<q_3$ with $q_1\ge p_{s-1}$".
Since $p_{s-1}<n/4$, by well-known results on the odd Goldbach
problem with almost equal primes, this statement is true for all
sufficiently large $s$. Hence, Conjecture 1 is true for all
sufficiently large $s$.

Now we give a sketch proof of Theorem \ref{thm1}. For the details,
see Section \ref{sec2}.

(1) Find a ``long" interval $[1102, 3858]$ such that each even
number in this interval can be represented as
$\sum\limits_{i=2}^\infty (p_i^{t_i}-p_i)$. For any even number
$2m>3858$, there exists a prime $p_u$ such that $p_u^2-p_u\le
2m-1102<p_{u+1}^2-p_{u+1}$. Then we use the induction hypothesis
on $2m-(p_u^2-p_u)$. By these arguments we know that every even
number $n\ge 1102$ can be represented as $\sum\limits_{i=2}^\infty
(p_i^{t_i}-p_i)$, where $t_i$ are positive integers. One can
verify that 1100 cannot be represented as
$\sum\limits_{i=2}^\infty (p_i^{t_i}-p_i)$, where $t_i$ are
positive integers.

(2) Denote by $\mu_s'$ the least integer, which has the same
parity as $s$, so that every integer $\ell >\mu_s'$, which has the
same parity as $s$, can be expressed as the sum of $s$ distinct
integers $>1$ which are pairwise relatively prime. Let
$\mu_s'=p_2+\cdots +p_{s+1}+\tau_s'$. Then $\tau_s'$ is even.

For $2n>\min \{ 1100, p_{s+2}\}$, if $\min \{ 1100, p_{s+2}\} <2n\le
1100$, then $s\le 182$. By calculation we know that
$\sum\limits_{i=2}^{s+1}p_i+2n$ can be expressed as the sum of $s$
distinct odd primes. Now we assume that $2n>1100$.  If $2n$ is
``large", then we can choose a ``large" prime $q$ such that
$p_{s+2}+2n-q >\tau_s'$. By the induction hypothesis, $p_2+\cdots
+p_{s+1}+(p_{s+2}+2n-q) $ can be expressed as the sum of $s$
distinct integers $>1$, which are pairwise relatively prime. Thus
$p_2+\cdots +p_{s+1}+p_{s+2}+2n $ can be expressed as the sum of
$s+1$ distinct integers $>1$ which are pairwise relatively prime; if
$2n$ is ``small", then by (1) (we take some $t_i=1$)
$$2n=\sum\limits_{i=2}^{s+2} (p_i^{t_i}-p_i).$$
Thus
$$p_2+\cdots +p_{s+1}+p_{s+2}+2n=\sum\limits_{i=2}^{s+2}
p_i^{t_i}.$$ We can easily convert the case $p_2+\cdots
+p_{s+1}+p_{s+2}+2n+1 $ into $p_1+p_2+\cdots
+p_{s+1}+(p_{s+2}+2n-1)$ and use the induction hypothesis.

Recall that $\mu_s'$ is the least integer, which has the same
parity as $s$, so that every integer $\ell >\mu_s'$, which has the
same parity as $s$, can be expressed as the sum of $s$ distinct
integers $>1$ which are pairwise relatively prime, and
$\tau_s'=\mu_s'-(p_2+\cdots +p_{s+1})$ is even. The following
Theorem \ref{tau'} is a step in the proof of Theorem \ref{thm1},
and not an independent result.

\begin{thm}\label{tau'}
$$\tau_s'=\max \{ 2n \mid 2n\le \min \{ 1100, p_{s+2}\},\,  n\in \mathbb{Z}, 2n\notin U+V_s\} . $$
\end{thm}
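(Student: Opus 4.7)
The plan is to prove the equality $\tau_s'=M_s$, where $M_s$ denotes the right-hand side, by establishing both inequalities, using induction on $s$ for the sufficiency direction.

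For the inequality $\tau_s'\geq M_s$ (necessity), I take any even $2n\le \min\{1100,p_{s+2}\}$ with $2n\notin U+V_s$ and aim to show that $\ell:=\sum_{i=2}^{s+1}p_i+2n$ admits no representation as a sum of $s$ distinct pairwise coprime integers exceeding $1$. Assuming $\ell=a_1+\cdots+a_s$ is such a representation, the parity $\ell\equiv s\pmod 2$ together with pairwise coprimality (at most one even term) forces every $a_k$ to be odd. Letting $q_k$ denote the smallest prime factor of $a_k$, I split
\begin{equation*}
2n=\sum_{k=1}^{s}(a_k-q_k)+\Big(\sum_{k=1}^{s}q_k-\sum_{i=2}^{s+1}p_i\Big),
\end{equation*}
and observe that the second summand equals $\sum_l(p_{i_l}-p_{j_l})$ for the natural choice of missing indices $\{p_{j_l}\}=\{p_2,\dots,p_{s+1}\}\setminus\{q_k\}$ and added indices $\{p_{i_l}\}=\{q_k\}\setminus\{p_2,\dots,p_{s+1}\}$. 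The budget $2n\le \min\{1100,p_{s+2}\}$ does the heavy lifting: it forces each prime power base with index $\geq 12$ to carry exponent $1$ (since $p_{12}^2-p_{12}=1332>1100$), and it forces each $a_k$ to be a genuine prime power (an additional prime factor $r$ of $a_k$ would produce an excess $a_k-q_k\ge q_k(r-1)$ exceeding the budget, except in a few small configurations to be handled separately). After these reductions, the first summand lies in $U$ and the second in $V_s$, giving $2n\in U+V_s$ and contradicting the hypothesis.

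For the inequality $\tau_s'\le M_s$ (sufficiency), I take $\ell$ of the correct parity with $\ell>\sum_{i=2}^{s+1}p_i+M_s$ and construct a representation. Writing $2n:=\ell-\sum_{i=2}^{s+1}p_i$, I treat two ranges. When $2n\le \min\{1100,p_{s+2}\}$, the definition of $M_s$ forces $2n\in U+V_s$; choosing a decomposition $2n=u+v$ with $u\in U$ and $v\in V_s$, I recover the desired representation by starting from $\{p_2,\dots,p_{s+1}\}$, lifting the bases prescribed by $u$ to their higher powers, and performing the prime swaps prescribed by $v$. When $2n>\min\{1100,p_{s+2}\}$, I apply the inductive strategy sketched in the introduction: for $2n$ in the intermediate band, step (1) of the sketch expresses $2n=\sum_{i=2}^{s+2}(p_i^{t_i}-p_i)$, so $\ell=\sum_{i=2}^{s+2}p_i^{t_i}$ is a direct representation; for larger $2n$, pick a prime $q>p_{s+1}$ such that $\ell-q$ lies above $\mu_{s-1}'$ and invoke the inductive hypothesis on $s-1$ to represent $\ell-q$.

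The two chief obstacles I anticipate are (a) in the necessity step, cleanly ruling out composite $a_k$, especially when the extra prime factor coincides with one of the missing small primes $p_{j_l}$ so that the "excess" $q_k(p_{j_l}-1)$ could in principle hide inside a different decomposition of $2n$ in $U+V_s$; and (b) in the sufficiency step, the compatibility issue that a chosen decomposition $u+v=2n$ might simultaneously ask to raise $p_j$ to a higher power (via $u$) and swap $p_j$ away (via $v$), which is incompatible with a single valid representation. For (b) the plan is to show that among all decompositions $u+v=2n$ one can always be selected to avoid this overlap, using the freedom in the exponents $k_i$ of $U$ and in the choice of swap pairs of $V_s$, together with a separate base-case check for small $s$ (roughly $s\le 11$, where the index range $2\le i\le 11$ of $U$ is most restrictive). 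I expect step (b) to require the most delicate combinatorial bookkeeping.
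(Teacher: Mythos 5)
Your overall architecture (prove both inequalities, induct on $s$, use membership in $U+V_s$ as the dividing line) matches the paper's, but the three places you defer are exactly where the content lies, and in two of them the idea you propose would not go through as stated. In the necessity direction, your local budget argument for forcing each $a_k$ to be a prime power fails: for $a_k=3\cdot 5=15$ the ``excess'' $a_k-q_k=12$ is nowhere near $1100$, and this is the typical situation rather than an exceptional one, so $\sum_k(a_k-q_k)$ need not lie in $U$. The paper's Lemma \ref{lem5} rules out composite terms globally rather than locally: if even one of the $s$ pairwise coprime odd terms has two distinct prime factors, then, since the terms have disjoint odd prime supports, their total is at least $3\cdot 5+p_4+\cdots+p_{s+2}=p_2+\cdots+p_{s+1}+p_{s+2}+7$, which already exceeds $\ell$ because $2n\le p_{s+2}$. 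Some such global count is indispensable here.

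In the sufficiency direction, your obstacle (b) (a decomposition $2n=u+v$ asking to lift $p_j$ to a power and simultaneously swap $p_j$ out) does not call for selecting among decompositions or combinatorial bookkeeping: the paper shows the conflict is impossible for \emph{every} decomposition with $2n\le p_{s+2}$, since if a lifted index $e_m$ and a swapped-out index $w_1$ satisfied $w_1\le e_m$ one would get $2n\ge p_{e_m}(p_{e_m}-1)-p_{w_1}+p_{s+2}\ge p_{e_m}(p_{e_m}-2)+p_{s+2}>p_{s+2}$. You would need to find this (or an equivalent) size argument; the ``freedom in the exponents'' route has no evident endpoint. In the large-$2n$ case you subtract a prime $q>p_{s+1}$ and invoke induction, but never ensure that $q$ is distinct from and coprime to the resulting terms $n_1<\cdots<n_{s-1}$; the paper's resolution is the crux of its induction: take $q\in\left(\tfrac{2}{3}(p_{s+2}+2t),\,p_{s+2}+2t\right)$, and when $q\le n_s$ deduce $2n<6p_{s+2}$, which is precisely the bound needed to force $t_i=1$ for all large $i$ in the Lemma \ref{lem2} representation and hence to obtain $\ell=\sum p_i^{t_i}$ directly. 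Your two ``bands'' are not delimited and the claim that the exponents beyond index $s+2$ equal $1$ is unjustified without that bound. Finally, for $s\le 182$ the range $p_{s+2}<2n\le 1100$ is covered by neither of your tools (Lemma \ref{lem2} requires $2n\ge 1102$); the paper disposes of it by the explicit computation with the sets $H(s)$, which your proposal omits.
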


\section{Preliminary Lemmas}

In this paper, $p, q_i$ are all primes. First we introduce the
following lemmas.

\begin{lem}\label{lem1}\cite[Lemma 4]{Chen}  For $x>24$ there exists a prime in $(x,
\sqrt{\frac{3}{2}}x]$.\end{lem}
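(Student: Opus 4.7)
The plan is to combine a direct verification on a finite initial range $(24,X_0]$ with an analytic estimate on $[X_0,\infty)$, for a threshold $X_0$ chosen so that the two regimes fit together.

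For the finite range, I would use the following reduction. On the interval $x\in[p_k,p_{k+1})$ the smallest prime exceeding $x$ is $p_{k+1}$, and the binding constraint $p_{k+1}\le\sqrt{3/2}\,x$ is tightest at the left endpoint, yielding the single numerical inequality $p_{k+1}\le\sqrt{3/2}\,p_k$. I would therefore tabulate consecutive primes up to $X_0$ and verify $p_{k+1}/p_k\le\sqrt{3/2}\approx 1.2247$ for every $p_k\ge 29$ in that range. The boundary case just above $x=24$ reduces to $29\le\sqrt{3/2}\cdot 24\approx 29.39$, which is visible by inspection; this is why the threshold $24$ in the statement is sharp.

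For the tail $x\ge X_0$, I would invoke an explicit Rosser--Schoenfeld-type bound of the shape $|\theta(x)-x|\le\eta(x)\,x$ with $\eta(x)\to 0$, where $\theta(x)=\sum_{p\le x}\log p$. This gives
$$
\theta\!\left(\sqrt{3/2}\,x\right)-\theta(x)\ \ge\ \Bigl(\sqrt{3/2}-1-\sqrt{3/2}\,\eta\bigl(\sqrt{3/2}\,x\bigr)-\eta(x)\Bigr)\,x.
$$
Since $\sqrt{3/2}-1>0.224$ and $\eta\to 0$, the right-hand side is strictly positive for all sufficiently large $x$, forcing a prime in $(x,\sqrt{3/2}\,x]$. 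Making $\eta$ explicit gives an effective value for $X_0$.

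The main obstacle is the calibration of $X_0$: it must be small enough that the finite list of consecutive-prime ratios can be checked without pain, yet large enough that the Chebyshev-type inequality has already become sharp. The sharpest published explicit bounds on $\theta$ allow $X_0$ on the order of a few hundred, so both halves remain short. A cleaner alternative would be to invoke Nagura's theorem directly: it asserts that for $x\ge 25$ there is a prime in $[x,6x/5]$, and since $6/5<\sqrt{3/2}$ this implies Lemma \ref{lem1} at once, reducing the task to either citing Nagura or reproducing his binomial-coefficient argument.
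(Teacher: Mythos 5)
The paper does not prove this lemma at all: it is quoted verbatim from \cite[Lemma 4]{Chen} and used as a black box, so there is no internal proof to compare against. Judged on its own, your plan is correct and standard. The reduction of the finite range to the single inequality $p_{k+1}\le\sqrt{3/2}\,p_k$ on each interval $[p_k,p_{k+1})$ is sound (the constraint is indeed tightest at the left endpoint), and the boundary computation $29\le\sqrt{3/2}\cdot 24\approx 29.39$ correctly explains why $24$ is the cutoff --- note that the ratio $29/23\approx 1.261$ exceeds $\sqrt{3/2}\approx 1.2247$, which is exactly why the lemma fails just below $24$, while every consecutive-prime ratio from $31/29$ onward stays under $\sqrt{3/2}$. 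The Chebyshev-type tail estimate is also correct as written. The Nagura shortcut is the cleanest route and is surely close to what Chen's cited lemma rests on; two small points need attention there: Nagura's theorem is usually stated for $x\ge 25$ with a prime strictly between $x$ and $6x/5$, so you must (a) use the strict form to get a prime in the half-open interval $(x,\sqrt{3/2}\,x]$ when $x$ is itself prime, and (b) cover the sliver $24<x<25$ separately --- but your boundary check with the prime $29$ already does (b), so the argument closes. In short: no gap, just a proof the paper outsources and you supply.
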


\vskip 3mm

\begin{lem}\label{lem2} Every even number  $n\ge 1102$ can be
represented as $\sum\limits_{i=2}^\infty (p_i^{t_i}-p_i)$, where
$t_i$ are positive integers. The integer 1100 cannot be
represented as $\sum\limits_{i=2}^\infty (p_i^{t_i}-p_i)$, where
$t_i$ are positive integers.\end{lem}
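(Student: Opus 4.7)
The plan is to prove the positive claim (every even $n\ge 1102$ admits a representation) by strong induction on $n$, with an explicit finite base case covering a long initial interval, and to dispose of the negative claim (for $n=1100$) by a direct finite subset-sum check. A first observation is that $p_i^{t_i}-p_i$ is always even, since $p_i^{t_i}$ and $p_i$ share the parity of $p_i$, so any admissible sum is automatically even; this matches the restriction to even $n$.

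For the base case I would verify, by finite enumeration (Mathematica, as the authors suggest), that every even $n\in[1102,3858]$ admits a representation of the required form; the candidate summands bounded by $3858$ form a short explicit list (only finitely many pairs $(p_i,t_i)$ with $p_i^{t_i}-p_i\le 3858$), making the search routine. For the inductive step, suppose the claim holds for all even $n'\in[1102,n)$ with $n>3858$. By Lemma \ref{lem1} I choose the largest prime $p_u$ with $p_u^2-p_u\le n-1102$, so that $p_{u+1}^2-p_{u+1}>n-1102$. The density estimate $p_{u+1}\le\sqrt{3/2}\,p_u$ from Lemma \ref{lem1} (applicable since $p_u>24$ once $n>3858$) then forces $p_u^2-p_u$ to be large enough that $N:=n-(p_u^2-p_u)$ satisfies $N<p_u^2-p_u$, while by construction $N\ge 1102$. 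Applying the inductive hypothesis, write $N=\sum_i(p_i^{t_i}-p_i)$; since every nonzero summand here is at most $N<p_u^2-p_u$, the prime $p_u$ can only appear in this sum with $t_u=1$ (contributing $0$), so I may raise its exponent to $t_u=2$ and adjoin the term $p_u^2-p_u$, producing a valid representation of $n$. The negative claim is a finite check: the candidate nonzero summands $p_i^{t_i}-p_i\le 1100$ come only from $p_i\le 31$ with small exponents, and direct enumeration of the resulting subset sums shows that $1100$ is not realised.

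The main obstacle is the clash possibility in the induction: if the inductive representation of $N$ already used $p_u$ with exponent $\ge 2$, one could not simply add another $p_u^2-p_u$ and still obtain a sum of the prescribed form. This is resolved by using Lemma \ref{lem1} to force $p_u^2-p_u>N$, which precludes $p_u$ from appearing nontrivially in the representation of $N$; the length of the base interval $[1102,3858]$ must be calibrated precisely so that this inequality is guaranteed exactly when induction takes over.
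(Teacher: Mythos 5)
Your proposal is correct and follows essentially the same route as the paper: a computational base case on $[1102,3858]$, induction by subtracting $p_u^2-p_u$ for the largest prime $p_u$ with $p_u^2-p_u\le n-1102$, Lemma \ref{lem1} to rule out a clash at $p_u$, and a finite subset-sum check for $1100$. The only cosmetic difference is that you establish $N<p_u^2-p_u$ directly, whereas the paper assumes $t_u\ge 2$ and derives a contradiction from the same inequality.
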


\begin{proof} The proof is by induction on even numbers $n$.
For any sets $X,Y$ of integers, define $X+Y=\{ x+y : x\in X, y\in
Y\} $.  Let
\begin{eqnarray*}
U_4&=&\{0,3^2-3,3^3-3,3^4-3,3^5-3,3^6-3,3^7-3\}\\
&&+\{0,5^2-5,5^3-5,5^4-5\}+\{0,7^2-7,7^3-7\},\\
U_i&=&U_{i-1}\cup (U_{i-1}+\{ p_{i}^2-p_{i}\}) , \quad i=5, 6,
\cdots.\end{eqnarray*}

 By
Mathematica, we can produce each $U_i$ and verify that $[1102,
3858]\cap 2\mathbb{Z}\subseteq U_{12}$ and $1100\notin U_{12}$.

Thus, if $n$ is an even number  with $1102\le n\le 3858$, then $n$
can be represented as $\sum\limits_{i=2}^\infty (p_i^{t_i}-p_i)$,
where $t_i$ are positive integers.

Now assume that for any even number $n$ with $1102\le n<2m$
$(2m>3858)$, $n$ can be represented as $\sum\limits_{i=2}^\infty
(p_i^{t_i}-p_i)$, where $t_i$ are positive integers.

Since $2m-1102>3858-1102=53^2-53$, there exists a prime $p_u\ge 53$
with
\begin{equation}\label{x} p_u^2-p_u\le
2m-1102<p_{u+1}^2-p_{u+1}.\end{equation}
 Then
$$1102\le 2m-(p_u^2-p_u)<2m.$$ By the induction hypothesis, we have
$$2m-(p_u^2-p_u)=\sum_{i=2}^\infty
(p_i^{t_i}-p_i),$$ where $t_i$ are positive integers. Hence
\begin{equation}\label{xy}2m=\sum_{i=2}^\infty
(p_i^{t_i}-p_i)+(p_u^2-p_u).\end{equation} Now we  prove that
$t_u=1$. If this is not true, then $t_u\ge 2$ and $2m\ge
2(p_u^2-p_u)$. By (\ref{x}) we have
$$2(p_u^2-p_u)-1102\le 2m-1102<p_{u+1}^2-p_{u+1}<p_{u+1}^2-p_{u}.$$
Thus
$$2p_u^2-p_u-1102<p_{u+1}^2.$$
By $p_u\ge 53$ and Lemma \ref{lem1} we have $p_{u+1}\in (p_u,
\sqrt{\frac{3}{2}}p_u]$. Since
$$\sqrt{\frac{3}{2}}p_u\le \sqrt{2p_u^2-p_u-1102},$$ we have
$$p_{u+1}^2\le 2p_u^2-p_u-1102,$$ a contradiction. So $t_u=1$. By (\ref{xy}), we
have $2m$ can be represented as  $\sum\limits_{i=2}^\infty
(p_i^{t_i'}-p_i)$, where $t_i'$ are positive integers. Therefore,
every even number $n\ge 1102$ can be expressed as the form
$\sum\limits_{i=2}^\infty (p_i^{t_i}-p_i)$, where $t_i$ are positive
integers.

Suppose that 1100 can be expressed as $\sum\limits_{i=2}^\infty
(p_i^{t_i}-p_i)$, where $t_i$ are positive integers. Then
$p_i^{t_i}-p_i\le 1100$ for all $i$. If $t_i\ge 2$, then
$p_i^2-p_i\le 1100$. Thus $p_i<37$. So $i<12$. If $t_i\ge 3$, then
$p_i^3-p_i\le 1100$. Thus $p_i\le 7=p_4$. By $p_2^{t_2}-p_2\le
1100$ we have $t_2\le 6$. By $p_3^{t_3}-p_3\le 1100$ we have
$t_3\le 4$. By $p_4^{t_4}-p_4\le 1100$ we have $t_4\le 3$.
 Hence $1100\in U_{12}$, a contradiction. Therefore 1100 cannot  be expressed as
$\sum\limits_{i=2}^\infty (p_i^{t_i}-p_i)$, where $t_i$ are positive
integers. This completes the proof of Lemma \ref{lem2}.\end{proof}

\begin{lem}\label{lem5}  If   $2n<p_{s+2}$
and $\sum\limits_{i=2}^{s+1}p_i+2n$ is the sum of exactly $s$
integers $>1 $ which are pairwise relatively prime,
 then $\sum\limits_{i=2}^{s+1}p_i+2n$ can be expressed
as the sum of  powers of $s$ distinct primes.\end{lem}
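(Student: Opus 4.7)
The plan is to take an arbitrary decomposition $N = a_1 + \cdots + a_s$ of $N := \sum_{i=2}^{s+1} p_i + 2n$ into $s$ pairwise coprime integers $>1$, and to argue directly that each $a_j$ is already a prime power. Since the $a_j$ are pairwise coprime, the underlying prime bases will then automatically be distinct, giving the required representation of $N$ as a sum of powers of $s$ distinct primes.

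The first move is a parity observation that is crucial to what follows. Because $p_2, p_3, \ldots, p_{s+1}$ are all odd and $2n$ is even, $N \equiv s \pmod{2}$. On the other hand, pairwise coprime integers can contain at most one even number; if exactly one $a_j$ were even, the sum would have parity $s - 1 \not\equiv s \pmod{2}$, a contradiction. Hence every $a_j$ is odd, and denoting by $q_j$ the smallest prime factor of $a_j$, the primes $q_1, \ldots, q_s$ are $s$ distinct odd primes.

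Now I argue by contradiction: suppose some $a_k$ is not a prime power. Then $a_k$ has a second prime factor $r_k \neq q_k$, and pairwise coprimeness forces $r_k \neq q_j$ for every $j$, so $\{q_1, \ldots, q_s, r_k\}$ is a set of $s+1$ distinct odd primes. Using $a_k \geq q_k r_k$ and $a_j \geq q_j$ for $j \neq k$, together with the identity $q_k r_k = q_k + r_k - 1 + (q_k - 1)(r_k - 1)$,
$$
N \;\geq\; \sum_{j \neq k} q_j + q_k r_k \;=\; \Bigl(\sum_{j=1}^{s} q_j + r_k\Bigr) + \bigl[(q_k - 1)(r_k - 1) - 1\bigr].
$$
Since $q_k, r_k$ are distinct odd primes the bracketed term is at least $2 \cdot 4 - 1 = 7$, while the sum of any $s + 1$ distinct odd primes is at least $p_2 + p_3 + \cdots + p_{s+2}$. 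Hence $N \geq p_2 + \cdots + p_{s+2} + 7$, which contradicts $N = \sum_{i=2}^{s+1} p_i + 2n < p_2 + \cdots + p_{s+1} + p_{s+2}$ given by $2n < p_{s+2}$.

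The main point to appreciate is the role of the parity step. Without it one would have to admit the possibility $q_j = 2$ for some $j$; in the subcase where that same $a_j$ fails to be a prime power, the bracket $(q_j - 1)(r_j - 1) - 1 = r_j - 2$ is only $\geq 1$, far too small to force a contradiction for large $s$. The fact that the given sum begins at $i = 2$ (so all of $p_2, \ldots, p_{s+1}$ are odd) is precisely what prevents this subcase from ever arising.
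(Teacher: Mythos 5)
Your proof is correct and follows essentially the same route as the paper: the same parity argument shows all $s$ summands are odd, and the same counting of least prime factors plus one extra prime factor yields the lower bound $p_2+\cdots+p_{s+2}+7$, contradicting $2n<p_{s+2}$. The only difference is that you justify the minimization $\sum_{j\ne k}q_j+q_kr_k\ge 3\cdot 5+p_4+\cdots+p_{s+2}$ in detail via the identity $q_kr_k=q_k+r_k-1+(q_k-1)(r_k-1)$, whereas the paper states the bound directly.
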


\begin{proof} Let $$\sum_{i=2}^{s+1}p_i+2n=\sum_{i=1}^s m_i,$$
where $ 1<m_1<\cdots<m_s$ and $ (m_i, m_j)=1$ for $1\le i, j\le
s$, $i\neq j.$ By comparing the parities we know that these $s$
integers $m_i$ must all be odd. If one of these $s$ integers has
at least two distinct prime factors, then the sum of these $s$
integers is at least $3\times 5+p_4+\cdots
+p_{s+2}=p_2+\cdots+p_{s+1}+p_{s+2}+7$. This contradicts $2n\le
p_{s+2}$.   This completes the proof of Lemma
\ref{lem5}.\end{proof}

\section{Proof of Theorem \ref{tau'}}

 For $s\ge 2$, let
\begin{eqnarray*}H(s)&=&\{ p_j-p_i : 2\le i\le s+1<j\le 185\}\\
&&\bigcup \{ p_u+p_v-p_s-p_{s+1} : s\le u\le 105, u<v\le 180\}
.\end{eqnarray*} By Mathematica, for $2\le s \le 182$ we find that
$[p_{s+2}, 1100]\cap 2\mathbb{Z} \subseteq H(s)$. Thus, for
$p_{s+2}< 2n\le 1100$, $\sum\limits_{i=2}^{s+1}p_i+2n$ can be
expressed as the sum of $s$ distinct odd primes.

Let $h_s $ be the largest even number $2n\le 1100$ such that
$\sum\limits_{i=2}^{s+1}p_i+2n$ cannot be expressed as the sum of
$s$ distinct integers $>1 $ which are pairwise relatively prime.
Noting that $p_{s+2}>1100$ for  $s\ge 183$, by the above arguments
we have $h_s\le \min\{ 1100,p_{s+2}\}$ for all $s\ge 2$.

 We will use induction on $s$ to prove that $\tau_s'=h_s $ for all $s\ge
2$.

For every even number $\ell >6$, we have $\phi(\ell )>2$, where
$\phi(\ell )$ is the Euler's totient function. Hence there exists
an integer $n$ with $2\le n\le \ell -2$ and $(n,\ell )=1$. So
$$\ell =n+(\ell -n),\quad (n, \ell -n)=1,\quad n\ge 2, \ell -n\ge 2.$$ Thus
$\tau_2'=-2=h_2$. Suppose that
 $\tau_s'=h_s$. Now we prove that
$\tau_{s+1}'=h_{s+1}$.

 Let $\ell $ be an integer which has the same parity as $s+1$. Write
 $$\ell =\sum\limits_{i=2}^{s+2}p_i+2n.$$
 Then $2n$ is an even number. By the definition of $\tau_{s+1}'$ and
 $h_{s+1}$, it is enough to prove that if $2n>1100$, then $\sum\limits_{i=2}^{s+2}p_i+2n$
 can be expressed as the sum of
$s+1$ distinct integers $>1 $ which are pairwise relatively prime.

Assume that $2n>1100$. Write $2t=2n-\tau_s'$.  By $\tau_s'=h_s\le
p_{s+2}$ we have $p_{s+2}+2t=p_{s+2}+2n-\tau_s'\ge 2n>1100$. By
Lemma \ref{lem1} there exists an odd prime $q$ with $\frac{2}{3}
(p_{s+2}+2t)<q< p_{s+2}+2t$. Then
$$\ell -q>\ell -p_{s+2}-2t=\sum_{i=2}^{s+1}p_i+\tau_s'.$$
 Since $$\ell -q \equiv s \pmod 2,$$ by the induction hypothesis, we have
$$\ell -q=n_1+\cdots+n_s,$$ where $1<n_1<\cdots<n_s$ and $(n_i, n_j)=1$ for
$1\le i, j\le s$, $i\neq j$ . By $\ell -q\equiv s \pmod 2$ and
$(n_i, n_j)=1$ for $1\le i,j\le s$, $i\not= j$, we have $2 \nmid
n_i$ for $1\le i\le s$.

If $q>n_s$, we are done. Now we assume that $q\le n_s$. By $\ell
-q=n_1+\cdots+n_s,$ we have \begin{equation}\label{kl} \ell \ge
2q+p_2+\cdots+p_s>
\frac{4}{3}p_{s+2}+\frac{8}{3}t+p_2+\cdots+p_s.\end{equation} By
(\ref{kl}) and
$$\ell =\sum\limits_{i=2}^{s+2}p_i+2t+\tau_s',$$ we have
\begin{equation}\label{kw}\frac{1}{3}p_{s+2}-p_{s+1}+\frac{2}{3}t<\tau_s'.\end{equation}
Noting that $\tau_s'\le p_{s+2}$, by (\ref{kw}) we have
\begin{equation}\label{kx}2n=2t+\tau_s'< 4\tau_s'+3p_{s+1}-p_{s+2}<6p_{s+2}.\end{equation}

Since $2n>1100$, by Lemma \ref{lem2} we have
\begin{equation}\label{ky}2n=\sum_{i=2}^\infty (p^{t_i}_{i}-p_{i}),
\quad t_i\ge 1, \, i=2,3,\ldots .\end{equation} For $i\ge s+3$, by
(\ref{kx}) and (\ref{ky}) we have
$$p_{s+3}^{t_i}-p_{s+3}\le p_i^{t_i}-p_i\le 2n<6p_{s+2}.$$
Since $p_{s+3}-1\ge p_5-1=10$, we have $t_i=1$ for all $i\ge s+3$.
Hence
$$\ell =\sum\limits_{i=2}^{s+2}p_i+2n=\sum_{i=2}^{s+2}p_i
+\sum_{i=2}^{s+2} (p^{t_i}_{i}-p_{i})=\sum_{i=2}^{s+2} p^{t_i}_{i}.$$
Thus we have proved that if $\ell =\sum\limits_{i=2}^{s+2}p_i+2n$
cannot be expressed as the sum of $s+1$ distinct integers $>1 $
which are pairwise relatively prime, then $2n\le 1100$. By the
definition of $h_{s+1}$ and $\tau_{s+1}'$, we have
$\tau_{s+1}'=h_{s+1}$. Therefore, $\tau_s'=h_s $ for all $s\ge 2$.

Now we have proved that $\tau_s'=h_s$ is the largest even number
$2n\le 1100$ such that $\sum\limits_{i=2}^{s+1}p_i+2n$ cannot be
expressed as the sum of $s$ distinct integers $>1 $ which are
pairwise relatively prime and $\tau_s'=h_s\le \min\{
1100,p_{s+2}\}$.

In order to prove Theorem \ref{tau'}, it is enough to prove that
$\tau_s'\notin U+V_s$ and   if $2n$ is an even number with $\tau_s'
<2n\le \min\{ 1100, p_{s+2}\}$, then $2n\in U+V_s$.

Let $2n$ be an even number with $\tau_s' <2n\le \min\{ 1100,
p_{s+2}\}$. Now we prove that $2n\in U+V_s$. By Lemma \ref{lem5} and
the definition of $\tau_s'$, we have
$$p_2+\cdots +p_{s+1}+2n=p_{l_1}^{\alpha_1}+\cdots +p_{l_s}^{\alpha_s},$$
 where $2\le l_1<\cdots <l_s$ and $\alpha_i\ge
1(1\le i\le s)$. If $l_1\ge s+2$, then $l_i\ge s+1+i (1\le i\le s)$.
Thus $l_s\ge 2s+1\ge 5$ and $p_{l_s}\ge p_5=11$. Hence
\begin{eqnarray*}
2n&=&p_{l_1}^{\alpha_1}+\cdots +p_{l_s}^{\alpha_s}-(p_2+\cdots +p_{s+1})\\
&\ge & p_{s+2}+\cdots +p_{2s+1}-(p_2+\cdots +p_{s+1})\\
&\ge & p_{s+2}+\cdots +p_{2s}+11-(p_2+\cdots +p_{s+1})\\
&> & p_{s+2},
\end{eqnarray*}
a contradiction with $2n\le \min\{ 1100, p_{s+2}\}$. So $l_1\le
s+1$. Let $r$ be the largest index with $l_r\le s+1$. If $r=s$, then
$l_i= i+1 (1\le i\le s)$. Thus
\begin{equation}\label{uv}2n=(p_{2}^{\alpha_1}-p_2)+\cdots
+(p_{s+1}^{\alpha_s}-p_{s+1}).\end{equation} If $r<s$, let
$$\{ 2, 3,\ldots , s+1\} =\{ l_1,\ldots , l_r\} \bigcup \{
j_1,\ldots , j_{s-r}\} $$ with $j_1<\cdots <j_{s-r}$. Hence
\begin{equation}\label{vu}2n=(p_{l_1}^{\alpha_1}-p_{l_1})+\cdots +(p_{l_r}^{\alpha_r}-p_{l_r})
+p_{l_{r+1}}^{\alpha_{r+1}}+\cdots
+p_{l_s}^{\alpha_s}-p_{j_1}-\cdots -p_{j_{s-r}}.\end{equation} For
$1\le i\le r$, if $\alpha_i\ge 2$, then by (\ref{uv}) and (\ref{vu})
we have
$$p_{l_i}(p_{l_i}-1)\le 2n\le 1100.$$
Thus $p_{l_i}\le 31$ and $l_i\le 11$. Hence
\begin{equation}(p_{l_1}^{\alpha_1}-p_{l_1})+\cdots +(p_{l_r}^{\alpha_r}-p_{l_r})\in U.\end{equation}
For $r<i\le s$, if $\alpha_i\ge 2$, then
\begin{eqnarray*}&&p_{l_{r+1}}^{\alpha_{r+1}}+\cdots +p_{l_s}^{\alpha_s}-p_{j_1}-\cdots -p_{j_{s-r}}\\
&\ge & p_{s+2}^2+(s-r-1)p_{s+3}-(s-r)p_{s+1}>p_{s+2}\ge
2n,\end{eqnarray*} a contradiction. So $\alpha_i=1$ for all $r<i\le
s$. By (\ref{vu}) we have $$p_{l_{r+1}}^{\alpha_{r+1}}+\cdots
+p_{l_s}^{\alpha_s}-p_{j_1}-\cdots -p_{j_{s-r}}\le 2n\le 1100.$$
Hence
\begin{equation}\label{u}
p_{l_{r+1}}^{\alpha_{r+1}}+\cdots +p_{l_s}^{\alpha_s}-p_{j_1}-\cdots
-p_{j_{s-r}}=p_{l_{r+1}}+\cdots +p_{l_s}-p_{j_1}-\cdots
-p_{j_{s-r}}\in V_s.\end{equation} By (\ref{uv}) - (\ref{u}) we have
$2n\in U+V_s$.

In order to prove Theorem \ref{tau'}, it suffices to prove that
$\tau_s'\notin U+V_s$.

Suppose that $\tau_s'\in U+V_s$. Then
$$\tau_s'=\sum_{i=2}^{11} (p_i^{\beta_i}-p_i)+p_{i_1}+\cdots +p_{i_l}-p_{w_1}-\cdots
-p_{w_l},$$ where $\beta_i(2\le i\le 11)$ are positive integers and
$w_1<\cdots <w_l\le s+1<i_1<\cdots <i_l$. Let
$$\sum_{i=2}^{11}
(p_i^{\beta_i}-p_i)=\sum_{i=1}^m(p_{e_i}^{d_{i}}-p_{e_i}),$$ where
$2\le e_1<\cdots <e_m\le 11$ and $d_i\ge 2(1\le i\le m)$. Since
$$p_{e_m}(p_{e_m}-1)\le p_{e_m}^{d_{m}}-p_{e_m}\le \tau_s'\le
p_{s+2},$$ we have $e_m\le s+1$. If $w_1\le e_m$, then
\begin{eqnarray*}
\tau_s'&=&\sum_{i=1}^m(p_{e_i}^{d_{i}}-p_{e_i})+p_{i_1}+\cdots
+p_{i_l}-p_{w_1}-\cdots -p_{w_l}\\
&\ge & p_{e_m}^{d_{m}}-p_{e_m}-p_{w_1}+p_{s+2}\\
&\ge & p_{e_m}(p_{e_m}-2)+p_{s+2}>p_{s+2},\end{eqnarray*} a
contradiction with $\tau_s'\le \min\{ 1100, p_{s+2}\}$. Hence
$e_m<w_1$. Thus $$2\le e_1<\cdots <e_m<w_1<\cdots < w_l\le
s+1<i_1<\cdots <i_l.$$ Let
$$\{ f_1,\ldots , f_{s-m-l}\} =\{ 2,\ldots , s+1\} \setminus \{
e_1, \ldots , e_m, w_1, \ldots , w_l\} .$$ Then
$$p_2+\cdots +p_{s+1}+\tau_s' =\sum_{i=1}^m
p_{e_i}^{d_{i}}+p_{f_1}+\cdots +p_{f_{s-m-l}}+p_{i_1}+\cdots
+p_{i_l}.$$ Since $e_1,\ldots , e_m, f_1, \ldots , f_{s-m-l},
i_1,\ldots , i_l$ are pairwise distinct, this contradicts the
definition of $\tau_s'$. Hence $\tau_s'\notin U+V_s$. This
completes the proof of Theorem \ref{tau'}.

\section{Proofs of Theorem \ref{thm1} and  Corollary \ref{cor2}}\label{sec2}

It is easy to see that $c_2=-2$ and $\{ 0, 2, 4, 6\} \in V_2$. Thus,
by $0\in U$, all even numbers $2n$ with $-2<2n\le \min\{ 1100,
p_{2+2}\}$ are in $U+V_2$. So Theorem \ref{thm1} is true for $s=2$.

Now we assume that $s>2$.

In order to prove Theorem \ref{thm1}, by Theorem \ref{tau'} it is
enough to prove that for any odd number $2k+1>\tau_s'$, $p_2+\cdots
+p_{s+1}+2k+1$ can be expressed as the sum of $s$ distinct integers
$>1 $ which are pairwise relatively prime. Since $\tau_s'\ge -2$, we
have $k\ge -1$. If $k=-1$, then
$$p_2+\cdots+p_{s+1}+2k+1=p_1+p_3+p_4+\cdots +p_{s+1}.$$
If $k=0$, then
$$p_2+\cdots+p_{s+1}+2k+1=p_1^2+p_3+p_4+\cdots +p_{s+1}.$$
Now we assume that $k\ge 1$. By Theorem \ref{tau'} we have
$p_{s+1}+2k-1> \tau_{s-1}'$. Hence
$$p_2+\cdots +p_s+(p_{s+1}+2k-1)=n_1+\cdots +n_{s-1},$$
where $1<n_1<\cdots<n_{s-1}$ and $(n_i, n_j)=1$ for $1\le i, j\le
s-1$, $i\neq j$. By $p_2+\cdots +p_s+(p_{s+1}+2k-1)\equiv s-1 \pmod
2$ and $(n_i, n_j)=1$ for $1\le i,j\le s-1$, $i\not= j$, we have $2
\nmid n_i$ for $1\le i\le s-1$. Thus
$$p_2+\cdots +p_s+(p_{s+1}+2k+1)=2+n_1+\cdots +n_{s-1}$$
is the required form.

This completes the proof of Theorem \ref{thm1}.

\begin{proof}[Proof of  Corollary \ref{cor2}.]
 Suppose that
$p_{s+2}-p_{s+1}>1100$. Then $V_s=\{ 0\} $. Since $1100\notin U$,
we have $1100\notin U+V_s$. By Theorem \ref{thm1} we have
$c_s=1100$. This completes the proof of the first part of
Corollary \ref{cor2}.

The second part now follows from the fact that the number of
primes $p\le x$, such that $p+k$ is prime, is bounded above by
$c\frac{x}{\log^2 x}$, where $c$ depends only on $k$ ( Brun
\cite{Brun}, S\'andor, Mitrinovi\'c and Crstici
\cite[p.238]{Sandor}, Wang \cite{Wang}). This completes the proof
of the second part of Corollary \ref{cor2}.\end{proof}

\section{Final Remarks}

 Let $A=([2,1100]\cap 2\mathbf{N})\setminus U$ and for $t<s$, let
\begin{eqnarray*}V_s(t)&=& \{ p_{s+2+i}-p_{s+1-j} \mid 0\le i,j\le
t\}\\
 &&\cup \{ p_{s+2+i}+p_{s+2+j}-p_{s+1-u}-p_{s+1-v} \mid 0\le i<j\le t, 0\le u<v\le
 t\} .\end{eqnarray*}
Let
$$a(s,t)= \max (A\setminus (U+V_s(t))).$$
If
$$a(s,t)<\min\{ p_{s+2+t}-p_{s+1}, p_{s+2}-p_{s+1-t},
p_{s+3}+p_{s+2}-p_{s+1}-p_s\},$$ then
$$a(s,t)= \max (A\setminus (U+V_s)).$$

Noting that $A=([2,1100]\cap 2\mathbf{N})\setminus U$, by Theorem
\ref{thm1} we have $c_s=a(s,t)$. Taking $t=5$, by Mathematica we
find that $c_{500}=16$, $c_{900}=14$, $c_{1000}=8$, $c_{2000}=22$,
etc.

\subsection*{Acknowledgements} The authors are supported
by the National Natural Science Foundation of China, Grant
No.11126302, 11071121. The first author is also supported by the
Natural Science Foundation of the Jiangsu Higher Education
Institutions, Grant No. 11KJB110006 and the Foundation of Nanjing
University of Information Science \& Technology. We would like to
thank the referee for his/her comments.

\newpage
\centerline{\bf Appendix}

\vskip 3mm

$U=\{$ 0, 6, 20, 24, 26, 42, 44, 48, 62, 66, 68, 78, 86, 98, 110,
116, 120, 126, 130, 134, 136, 140, 144, 152, 154, 156, 158, 162,
168, 172, 176,  178, 180, 182, 186, 188, 196, 198, 200, 204, 208,
218, 222, 224, 230,  234, 236, 240, 242, 250, 254, 260, 266, 272,
276, 278, 282, 286, 290,  292, 296, 298, 300, 302, 308, 310, 314,
316, 318, 320, 324, 328, 332,  334, 336, 338, 340, 342, 344, 348,
350, 352, 354, 356, 358, 360, 362,  364, 366, 368, 370, 380, 382,
384, 386, 388, 390, 392, 396, 398, 402,  404, 406, 408, 410, 412,
414, 416, 420, 424, 426, 428, 430, 434, 438,  440, 444, 446, 448,
450, 452, 454, 456, 458, 460, 462, 464, 466, 468,  470, 472, 476,
478, 480, 482, 486, 490, 492, 494, 496, 498, 500, 502,  504, 506,
508, 510, 512, 514, 516, 518, 520, 522, 524, 526, 528, 530,  532,
534, 536, 538, 540, 542, 544, 546, 548, 550, 554, 558, 560, 562,
564, 566, 568, 570, 572, 574, 576, 578, 580, 582, 584, 586, 590,
592,  596, 600, 602, 604, 606, 608, 612, 614, 616, 618, 620, 622,
624, 626,  628, 632, 634, 636, 638, 640, 642, 644, 646, 650, 652,
656, 658, 660,  662, 664, 666, 668, 670, 674, 676, 678, 680, 682,
684, 686, 688, 690,  692, 694, 696, 698, 700, 702, 704, 706, 710,
712, 714, 718, 722, 724,  726, 728, 730, 732, 734, 736, 738, 740,
742, 744, 746, 748, 750, 752,  754, 756, 758, 760, 762, 764, 766,
768, 770, 772, 776, 778, 780, 782,  784, 786, 788, 790, 792, 794,
796, 798, 800, 802, 804, 806, 808, 810,  812, 814, 816, 818, 820,
822, 824, 826, 830, 832, 834, 836, 838, 840,  842, 844, 846, 848,
850, 852, 854, 856, 858, 860, 862, 864, 866, 868,  870, 872, 874,
876, 878, 880, 882, 884, 886, 888, 890, 892, 894, 896,  898, 900,
902, 904, 906, 908, 910, 912, 914, 916, 918, 920, 922, 924,  926,
928, 930, 932, 934, 936, 938, 940, 942, 944, 946, 948, 950, 952,
954, 956, 958, 960, 962, 964, 966, 968, 970, 972, 974, 976, 978,
980,  982, 984, 986, 988, 990, 992, 994, 996, 998, 1000, 1002, 1004,
1006,  1008, 1010, 1012, 1014, 1016, 1018, 1020, 1022, 1024, 1026,
1028,  1030, 1032, 1034, 1036, 1038, 1040, 1042, 1044, 1046, 1048,
1050,  1052, 1054, 1056, 1058, 1060, 1062, 1064, 1066, 1068, 1070,
1072,  1074, 1076, 1078, 1080, 1082, 1084, 1086, 1088, 1090, 1092,
1094,  1096, 1098 $\} $.

\end{document}